\documentclass[11pt, DIV=12, a4paper]{scrartcl}

\usepackage{amsthm, amsfonts, amsmath, latexsym, mathrsfs, amssymb}
\usepackage{mathtools}

\usepackage[utf8]{inputenc}
\usepackage{xcolor}
\usepackage{graphicx}
\usepackage[T1]{fontenc}
\usepackage{newtx}

\usepackage{enumitem}

\usepackage{hyperref}
\hypersetup{
	colorlinks=true,
	linkcolor=blue!60!black,
	citecolor=blue!60!black,
	urlcolor=blue!60!black
}

\newtheorem{theorem}{Theorem}[section]
\newtheorem{lemma}[theorem]{Lemma}
\newtheorem{result}[theorem]{Result}

\newtheorem{corollary}[theorem]{Corollary}

\theoremstyle{definition}
\newtheorem{definition}[theorem]{Definition}

\newtheorem{prob}[theorem]{Problem}

\theoremstyle{remark}
\newtheorem{remark}[theorem]{Remark}

\numberwithin{equation}{section}

\newcommand{\bdy}{\partial}
\newcommand{\OM}{\Omega}

\newcommand{\Sone}{\mathbb{S}^1}

\newcommand{\smoo}{\mathcal{C}}

\newcommand{\Cn}{\mathbb{C}^n}

\newcommand{\C}{\mathbb{C}}
\newcommand{\R}{\mathbb{R}}
\newcommand{\N}{\mathbb{N}}

\newcommand{\D}{\mathbb{D}}

\newcommand{\B}{\mathbb{B}}

\newcommand{\F}{\textsf{Fix}}
\newcommand{\hol}{\mathcal{O}}
\newcommand{\A}{\textsf{Aut}}

\usepackage{enumitem}

\setlist[itemize]{noitemsep}
\setlist[enumerate]{noitemsep}

\usepackage[automark]{scrlayer-scrpage}
\clearpairofpagestyles 
\ohead{\headmark} 
\ofoot{\pagemark} 
\makeatletter
\def\blfootnote{\gdef\@thefnmark{}\@footnotetext}
\makeatother

\begin{document}
	\title{Finiteness of the fixed point sets of automorphisms}

	\author{ Bharathi Thiruvengadam \thanks{Bharathi is supported by the University Grants Commission (UGC), India, through the UGC-NET Senior Research Fellowship (SRF).\\
  \href{mailto:212114004@smail.iitpkd.ac.in}{212114004@smail.iitpkd.ac.in}, \href{mailto:bharathit.math@gmail.com}{bharathit.math@gmail.com}\\
Department of Mathematics, Indian Institute of Technology Palakkad,
Palakkad, Kerala- 678623, India} \and Jaikrishnan Janardhanan \thanks{Jaikrishnan is supported by a grant from ANRF: \textbf{ANRF/ARGM/2025/000619/MTR} \\
\href{mailto:jaikrishnan@iitpkd.ac.in}{jaikrishnan@iitpkd.ac.in}\\
Department of Mathematics, Indian Institute of Technology Palakkad,
Palakkad, Kerala- 678623, India} }

	\date{\today}

	\maketitle


	\begin{abstract}
		We investigate the size of fixed point sets of automorphisms of bounded
		domains in $\C^n$. In one complex variable, a nontrivial automorphism
		has at most two fixed points, but in higher dimensions fixed point sets
		need not be discrete. We show, under natural extension hypotheses, that
		discreteness forces finiteness. We also obtain a uniform bound for the
		number of fixed points of automorphisms in compact subgroups whose
		elements admit such extensions. 
	\end{abstract}

	\blfootnote{\textup{2020} \textit{Mathematics Subject Classification}: Primary 32H50;}

	\blfootnote{\textit{Key words and phrases}: Fixed points, Automorphisms, Bounded domains, Rouch\'e's theorem.}

	\section{Introduction}

	The study of fixed points of holomorphic self-maps of complex manifolds is of fundamental
	importance in several complex variables, complex geometry and complex dynamics.
	For a holomorphic self-map $f$ of a complex manifold $M$, we define its fixed
	point set by
	\[
		\F(f) := \{ z \in M : f(z) = z \}.
	\]
    
	In this paper we study the following basic question for bounded domains in
	$\C^n$: if an automorphism has a \emph{discrete} fixed point set, must that
	set be finite? More ambitiously, can one bound its cardinality in terms of
	the domain alone? 
    
    In one complex variable the picture is completely understood. On a hyperbolic planar domain, the
	existence of two fixed points is sufficient to ensure that a holomorphic self-map
	is an automorphism. Furthermore, three fixed points forces the map to be the
	identity. This is a classical result for which several proofs are available;
	see
	\cite{maskit1968conformal,Leschinger1978fixed,peschl1979conformal,minda1979fixed,suita1981fixed,fischer1987fixed,krantz2005conformal}.
	Recently, in the paper \cite{Bharathi2025JK}, we gave a new proof of this result
	using the topology of the balls under the Kobayashi distance. Since the only non-hyperbolic planar domains are $\C$ and $\C\setminus\{0\}$, whose automorphisms are M\"obius transformations, one immediately obtains the following

	\begin{result}
		Let $D \subset \C$ be a domain and let $f:D \to D$ be an automorphism that is
		not the identity. Then $\#\F(f) \leq 2$.
	\end{result}
	\medskip

	The higher-dimensional picture is markedly different as the fixed point set of an automorphism is not necessarily
	discrete: for instance, the map $(z,w) \mapsto (-z,w)$ on the bidisk fixes the complex line $\{0\}\times \D$.
	However, the fixed point set of a holomorphic 
    self-map is always an analytic subset of the domain. Indeed, if $D \subset \C^n$ is a domain
    and $f = (f_1,\dots,f_n):D \to D$ is holomorphic then 
    \[
        \F(f) = \{z \in D: f_1(z) - z_1 = f_2(z) - z_2 = \dots = f_n(z) - z_n = 0\}.
    \]
    As we have $n$ equations in $n$ variables, we should expect the fixed point set to be
 generically $0$-dimensional, i.e., a discrete set. In fact, by the inverse function theorem, if a point $p \in \F(f)$ is
    \textbf{not} an isolated point of $\F(f)$ then $1$ must necessarily be an eigenvalue of $df(p)$. 
    Thus, in higher dimensions, the natural question is not whether fixed points are few in an absolute
	sense, but rather whether one can control the size of fixed point sets of
	automorphisms that are \emph{a priori} assumed to be discrete. The following
	remarkable result of Vigué settles the question for domains that are bounded
	and convex. Indeed, the result shows that if the fixed point set of an automorphism
    is discrete then it is a connected discrete set and is therefore a singleton set whenever non-empty.

	\begin{result}
		[Vigué \cite{JPVigue1985}]\label{R:viguecvx} Let $f$ be a holomorphic self-map
		of a bounded convex domain $D \subset \mathbb{C}^{n}$. If $\F(f)$ is nonempty,
		then $\F(f)$ is a holomorphic retract of $D$. In particular, $\F(f)$ is a
		connected complex submanifold of $D$.
	\end{result}

	Beyond the convex case, much less is known. A fundamental result of Fridman--Ma--Vigu\'e
	\cite{fridman2006fixed} shows that for automorphisms of a bounded strictly pseudoconvex
	domain with real-analytic boundary, a discrete fixed point set is necessarily
	finite, and in fact uniformly bounded in cardinality.

	\begin{result}
		[\cite{fridman2006fixed}] \label{R:fridman} Let $D \subset \mathbb{C}^{n}$ ($n
		\ge 1$) be a bounded strictly pseudoconvex domain with real-analytic boundary.
		Then, for every automorphism $f$ of $D$ whose fixed-point set is discrete,
		the set of fixed points of $f$ is finite. Moreover, there exists a constant
		$m = m(D) \in \mathbb{N}$, depending only on $D$, such that
		$\# \mathsf{Fix}(f) \le m$, for all such automorphisms $f$.
	\end{result}
	\noindent
	The proof combines three deep ingredients.
	\begin{enumerate}
		\item The Wong--Rosay theorem \cite{Wong1977, Rosay1979} that characterizes
			the unit ball as the only smoothly bounded strongly pseudoconvex domain
			with non-compact automorphism group.

		\item The holomorphic extension of automorphisms past the boundary for strongly
			pseudoconvex domains with real-analytic boundary (see Result~\ref{R:extension}).

		\item Whitehead's theorem \cite{Whitehead1932} on the existence of geodesically convex
			neighborhoods applied to the Bergman metric.
	\end{enumerate}
	While the proof of Result~\ref{R:fridman} is short and elegant, the method breaks
	down when one does not have holomorphic extension of automorphisms beyond
	the boundary. Furthermore, as the method crucially relies on the fact that
	automorphisms are isometries of the Bergman metric, there is little hope
	that fixed point sets of more general holomorphic mappings can be studied with
	the same method. In the same paper \cite{fridman2006fixed}, the authors also constructed bounded domains
	admitting automorphisms with arbitrarily large finite fixed point sets,
	thereby showing that any general bound must depend on the domain. This leads naturally
	to the following basic problem (similar questions were also raised as unsolved
	problems in \cite{fridman2006fixed}).

	\begin{prob}
		\label{P:main} Let $D \subset \C^{n}$ be a bounded domain.
		\begin{enumerate}
			\item If $f \in \A(D)$ is such that $\F(f)$ is discrete then is it true that
				$\F(f)$ is finite?

			\item Furthermore, does there exist a uniform bound $m := m(D)$ such that
				if $f \in \A(D)$ has a discrete fixed point set, then $\# \F(f) \leq m$?
		\end{enumerate}
	\end{prob}

	The main purpose of this article is to show that both questions admit
	affirmative answers under natural hypotheses satisfied by large classes of
	bounded domains. More importantly, our method of proof is quite different
	from that of \cite{fridman2006fixed} and substantially more flexible. The two crucial observations that led us to
	our method are as follows.
	\begin{enumerate}
		\item The proof of Result~\ref{R:fridman} does \textbf{not} use any
			information about the structure of the fixed point set other than the
			hypothesis that it is discrete. This gave us a new unexplored line of attack.

		\item Whitehead's theorem works equally well in an arbitrary bounded domain to
			show that the fixed points of automorphisms cannot get ``too close'' to each
			other when the fixed point set is discrete. More precisely, given a point $a$
			in the domain $D$, there can be at most $1$ fixed point of an automorphism
			with a discrete fixed point set in a geodesically convex neighborhood around $a$.
			Thus, it suffices to understand what happens when fixed points accumulate at 
            a point of the boundary.
	\end{enumerate}

	In contrast to the proof of Result~\ref{R:fridman}, our method relies heavily
	on the fact that the fixed point set of a holomorphic self-map is a complex
	analytic set and that it is a complex submanifold when the domain is bounded. In
	order to understand what happens when fixed points accumulate at the boundary,
	we rely on Results~\ref{R:Cartaneigen} and \ref{R:vigue} below that show that a point
    $z_0$ in the fixed point set of an automorphism $f$ is an isolated point if and only if $1$ is not an eigenvalue of $df(z_0)$. Furthermore, the eigenvalues of $df(z_0)$ must lie on the unit 
    circle. Therefore, we can perturb the eigenvalues of the Jacobian matrix by iterating, 
    and they would continue to lie on the unit circle. It is now natural to wonder how much control we can have on
	the eigenvalues under iteration. This led us to look at standard theorems in
	Diophantine approximation like Kronecker's theorem and Weyl's
	equidistribution theorem. The culmination of our investigation is the \emph{arc-avoidance}
	lemma (Lemma~\ref{T:arc}). More precisely, the lemma shows that there is an arc about $1$ on the
	unit circle (the length of this arc depends only on the ambient dimension) 
    and an integer $k \ge 1$ such that the eigenvalues of $df^k(z_{0})$ avoid
	this arc. This arc-avoidance lemma
	is closely related in spirit to questions surrounding the famous lonely runner
	conjecture. By appropriately perturbing the eigenvalues by iterating and using the arc-avoidance lemma, we are able to
	force a contradiction. We believe that this method of perturbing the eigenvalues
	by iteration is of independent interest and may be useful in other problems concerning
	fixed points of holomorphic maps. For instance, the second-named author has used
	a similar idea in an earlier work that involved proper holomorphic mappings \cite{jaikrishnan2023} where eigenvalues
    were perturbed by composing with the symmetries of the domain.
	It is also worthwhile noting that though Whitehead's theorem was instrumental
	in guiding us to our method, we do not use it in the proof of our main result.
	Instead, we use a very simple version of Rouch\'e's theorem in higher dimensions
	to show that fixed points cannot be ``too close'' to each other.

	We prove two main results in this article. They are intentionally formulated
	as very general theorems in order to illustrate the scope of our methods. Our first
	result does \textbf{not} require any explicit regularity assumptions on the domain, but
    instead we require the automorphisms to extend beyond the boundary. 

	\begin{theorem}
		\label{T:main} Let $D\subset \C^{n}$ be a bounded domain.
		\begin{enumerate}
			\item If $f \in \A(D)$ is an automorphism that extends holomorphically beyond
				the boundary and whose fixed point set in $D$ is discrete then $\F(f)$
				is finite.

			\item Let $H$ be a compact subgroup of $\A(D)$. Assume that every
				$f \in H$ extends holomorphically beyond $\overline{D}$. Then we can find
				a number $m\in \mathbb{N}$ (depending only on $D$ and $H$) such that if $f
				\in H$ has a discrete fixed point set, then the fixed point set of $f$
				is a finite set of cardinality at most $m$.
		\end{enumerate}
	\end{theorem}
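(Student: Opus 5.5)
The plan is to argue by contradiction. Suppose $\F(f)$ is infinite (for (1)), or that there are $f_j\in H$ with discrete fixed point sets and $\#\F(f_j)\to\infty$ (for (2)). In either case the first task is to locate a point where fixed points pile up.

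\emph{Step 1: fixed points accumulate at a point of $\overline{D}$.} Since $D$ is bounded, infinitely many fixed points of $f$ accumulate at some $p\in\overline D$. For (2), $H$ is compact, so a subsequence $f_j\to f$ uniformly on compacts with $f\in H$. Choose distinct fixed points $a_j,b_j$ of $f_j$ with $|a_j-b_j|\to0$; this is possible because $\#\F(f_j)\to\infty$ in a bounded set. After passing to a subsequence, $a_j,b_j\to p\in\overline D$.

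\emph{Step 2: produce a germ of a holomorphic map near $p$ with colliding nondegenerate fixed points.} In case (1), $f$ extends holomorphically to a neighbourhood $U$ of $p$. Here I would need the extensions in (2) to be uniform: I would use compactness of $H$ together with a Cartan/Montel argument to get a common neighbourhood $U\supset\overline D$ on which the extensions converge. This uniformity is a point I expect to require care, perhaps an added lemma. By Results~\ref{R:Cartaneigen} and \ref{R:vigue}, at each isolated fixed point $z$ in $D$ the matrix $df(z)$ has eigenvalues on the unit circle, none equal to $1$. Iterating replaces $f$ by $f^k$, and $\F(f^k)\supseteq \F(f)$. By the arc-avoidance lemma (Lemma~\ref{T:arc}), for each such $z$ there is $k\le K(n)$ such that the eigenvalues of $df^k(z)$ avoid a fixed arc $A_\delta$ about $1$, where $\delta$ depends only on $n$. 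It then follows that $\|(df^k(z)-I)^{-1}\|\le C(n)$. Note that $df^k(z)$ is conjugate to a unitary matrix: the isotropy group is compact, so linearization makes $df(z)$ diagonalizable with unimodular eigenvalues, and this gives a norm bound up to the conjugating constant. Because $k$ ranges over finitely many values, pigeonhole yields a single $k$ that works for infinitely many of the accumulating fixed points. Set $g=f^k$ (respectively $g_j=f_j^k$, extended to $U$). These maps are uniformly bounded with uniformly bounded second derivatives on a neighbourhood of $p$, since $f^k$ still extends past $\overline D$ for a fixed $k$.

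\emph{Step 3: Rouch\'e-type separation gives the contradiction.} Let $h=g-\mathrm{id}$ near a fixed point $z$. Then $h(w)=(dg(z)-I)(w-z)+O(|w-z|^2)$, with the $O$-constant uniform by Step 2. Together with $|(dg(z)-I)v|\ge |v|/C(n)$, this shows $h(w)\ne0$ for $0<|w-z|<r$, where $r>0$ is uniform. Hence distinct fixed points in our family are at distance at least $r$ from each other. This contradicts accumulation at $p$, proving (1) and giving a uniform $m$ in (2) via a covering count of $\overline D$ by $r/2$-balls.

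The main obstacle is the lower bound on $dg(z)-I$. Arc avoidance controls the eigenvalues, but a norm bound on the inverse needs uniform control of the conjugation to a unitary matrix as $z\to\partial D$. Otherwise $dg(z)$ might degenerate non-normally. To handle this I would first try the simple higher-dimensional Rouch\'e theorem directly on $\det(dg-I)$, or else use the uniform $C^1$ bound on $g$ and its inverse near $\overline D$: both extend, so $\|dg^{\pm k}\|$ is bounded, and this bounds the condition number of the conjugation.
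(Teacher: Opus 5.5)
\textbf{There is a genuine gap, and for part (1) it is fatal.} Lemma~\ref{T:arc} only gives \emph{some} $k\in\N$; it never gives a bound $k\le K(n)$. If $\lambda_1=e^{i\theta}$ and $L$ has half-angle $\ell$, then $\lambda_1^k\in L$ for every $k<\ell/\theta$, so the needed $k$ is unbounded as $\theta\to0$. Your pigeonhole step and the single iterate $g=f^k$ therefore disappear.

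For one automorphism $f$ with fixed points $z_n\to p\in\bdy D$ this cannot be repaired. Unbounded iterates are in fact forced: $dF(z_n)\to dF(p)$, where $F$ is the extension, and $dF(p)$ has eigenvalue $1$ because $p$ is a non-isolated zero of $F-\mathrm{id}$. But the iterates $f^{k_n}$ need not extend to any common neighbourhood of $p$. On $D$ the only general derivative bounds are Cauchy estimates of order $\mathrm{dist}(z,\bdy D)^{-1}$. So neither your $C^2$ bound nor the bound on $\|(df^k(z)-I)^{-1}\|$ survives as $z\to p$. The latter fails in general: on the ball, $df(z)$ at a fixed point $z$ near the sphere is a unitary conjugated by a matrix of condition number $\asymp(1-|z|^2)^{-1/2}$. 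Bounding $\|dg\|$ and $\|dg^{-1}\|$ does not bound that conjugation either; you need $\sup_{m\in\mathbb Z}\|dg^m(z)\|$, which nothing in (1) controls.

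The missing idea is that (1) needs no eigenvalue analysis at all. $\F(F)$ is an analytic subset of a neighbourhood of $p$, so near $p$ it is a finite union of irreducible analytic sets. The one containing infinitely many $z_n$ is not a point, hence is pure of positive dimension. So $\F(f)=\F(F)\cap D$ has positive dimension at some $z_n$, contradicting discreteness. This is the paper's argument.

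\textbf{Part (2) does go through by your route, which differs from the paper's, once two points are supplied.} First, the common domain $U\Supset\overline D$ on which every $h\in H$ is an automorphism is a genuine theorem, not a Montel/Cartan consequence; you were right to flag it. It is Theorem~\ref{thm: uniform extension}, proved by Baire category on the closed sets $E_{j,m}$, Vitali and Hurwitz, then compactness.

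Second, the conjugation bound. Given $U$, every $f_j^m$ with $m\in\mathbb Z$ lies in $H$. Cauchy estimates then give $\|d(f_j^m)(z)\|\le M$ for $z\in\overline D$, and a uniform $C^2$ bound near $\overline D$. Averaging the inner product over the compact closure of $\{df_j(z)^m\}$ conjugates $df_j(z)$ to a unitary by a matrix of condition number at most $M^2$. Hence $\|(df_j^k(z)-I)^{-1}\|\le M^2/\mathrm{dist}(1,\Sone\setminus L)$ for whatever $k$ the lemma gives. Your Step 3 then yields a uniform separation radius $r$ for $\F(f_j)\subseteq\F(f_j^k)$ directly, with no pigeonhole and no subsequence.

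The paper instead argues by contradiction. It passes to a limit $f_k^{m_k}\to f\in H$ on $U$ and uses continuity of the spectrum to see that $df(z_0)$ has no eigenvalue in $L$. It then applies Rouch\'e to exclude two fixed points of $f_k^{m_k}$ near $z_0$. The paper's route needs no conjugation estimate; yours gives an explicit $r$ in terms of $M$, the $C^2$ bound and $L$.
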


	Our second theorem is for bounded domains with smooth boundary.
	Here it is unnatural to assume that automorphisms extend holomorphically
	beyond the boundary. Instead, we assume that they extend as
	diffeomorphism to the closure and that we have ``uniformity'' in the extension.  Notably, no pseudoconvexity assumption
	is built into the statement of our theorem.

	\begin{theorem}
    		\label{T:meta} Let $D \subset \mathbb{C}^{n}$ be a bounded domain with
		smooth boundary that satisfies the following conditions: 
    \begin{enumerate}
      \item Each $f \in \A(D)$ extends as a diffeomorphism to $\overline{D}$.
      \item If $f_n \to f$ in $\A(D)$, then $f_n \to f$ in the $\smoo^{\infty}$-topology on $\overline{D}$ (by this we mean each component of $f_n$ converges to the corresponding component of $f$ in $\smoo^{\infty}$-topology on $\overline{D}$).
    \end{enumerate}
    Then for each compact subgroup $H \subset \A(D)$ we can find
				a number $m\in \mathbb{N}$ (depending only on $D$ and $H$) such that if
				$f	\in H$ has a discrete fixed point set, then the fixed point set of
				$f$	is a finite set of cardinality at most $m$.

	\end{theorem}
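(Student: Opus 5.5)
We argue by contradiction and compactness, following the strategy outlined in the introduction. Suppose no bound $m$ exists. Then there are $f_j \in H$ with discrete fixed point sets and $\#\F(f_j) \ge j$ (possibly infinite). By compactness of $H$ we pass to a subsequence with $f_j \to f \in H$, and by hypothesis~2 the convergence holds in $\smoo^\infty(\overline{D})$, in particular $\smoo^1$ up to the boundary. The first step is a uniform separation lemma. We claim there is $\delta>0$, depending only on $D$ and $H$, such that for every $g\in H$ with discrete fixed point set, any two distinct points of $\F(g)$ are at distance at least $\delta$. Granted this, $\#\F(g)$ is bounded by the number of $\delta/2$-balls needed to pack $\overline{D}$, which gives the bound $m$. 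So the whole proof reduces to this separation lemma.

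To prove separation, suppose $p_j \ne q_j \in \F(g_j)$ with $|p_j-q_j|\to 0$ and $g_j \in H$ having discrete fixed point sets. Passing to subsequences, $g_j\to g$ in $\smoo^1(\overline{D})$ and $p_j,q_j\to z_0\in\overline{D}$. Writing $q_j-p_j = \int_0^1 dg_j(p_j+t(q_j-p_j))\,dt\,(q_j-p_j)$ (using convexity near $z_0$ if $z_0\in\bdy D$, or working in a small ball if $z_0$ is interior), we find that $1$ is, in the limit, an eigenvalue of an averaged Jacobian near $dg(z_0)$. More precisely, we get unit vectors $v_j$ with $(dg_j(p_j)-I)v_j \to 0$. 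By Results~\ref{R:Cartaneigen} and \ref{R:vigue}, discreteness means that $dg_j(p_j)$ has all eigenvalues on the unit circle and none equal to $1$. These eigenvalues may still approach $1$, so at this point there is no contradiction yet.

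Here we apply the arc-avoidance lemma (Lemma~\ref{T:arc}). There is an arc $A$ about $1$, of length depending only on $n$, and an integer $k_j\ge1$ such that the eigenvalues of $dg_j^{k_j}(p_j)$ avoid $A$. Note that $p_j,q_j$ are still fixed by $g_j^{k_j}\in H$, and $\F(g_j^{k_j})$ is still discrete near $p_j$, since $1$ is not an eigenvalue there. So we replace $g_j$ by $h_j := g_j^{k_j}\in H$. Again by compactness, $h_j\to h$ in $\smoo^1(\overline{D})$. Moreover $dh_j(p_j)$ is conjugate to a unitary-like matrix: it lies in a compact group, being the linear isotropy of a compact group (Cartan). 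Hence $\|(dh_j(p_j)-I)^{-1}\|$ is bounded by a constant $C(A)$. On the other hand, $(q_j-p_j) = \big(\int_0^1 dh_j(p_j+t(q_j-p_j))dt\big)(q_j-p_j)$, and by uniform equicontinuity of $dh_j$ on $\overline{D}$ (coming from $\smoo^2$ bounds and compactness in $\smoo^\infty$) the averaged matrix differs from $dh_j(p_j)$ by $o(1)$. This forces $q_j=p_j$ for large $j$, a contradiction. Equivalently, we can phrase this as a Rouch\'e-type statement: $h_j-\mathrm{id}$ has a single zero in a ball of uniform radius around $p_j$.

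The main obstacle is the uniformity in the bound on $\|(dh_j(p_j)-I)^{-1}\|$. The eigenvalues avoid the arc $A$, but $dh_j(p_j)$ need not be normal in the standard coordinates, so we must control its conjugating matrices uniformly. The plan is to use that $dh_j(p_j)$ preserves the Bergman metric at $p_j$, and that the Bergman metric is uniformly comparable to the Euclidean one only on compacts. Near the boundary we instead use the $\smoo^1$ convergence $h_j\to h$ together with a limiting eigenvalue argument for $dh(z_0)$. This second route requires showing that eigenvalue avoidance of $A$ persists in the limit and that a diffeomorphism of $\overline{D}$ with $dh(z_0)-I$ invertible has an isolated fixed point at $z_0$. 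We expect the near-boundary case, where the isotropy and compactness structure degenerates, to be the delicate point.
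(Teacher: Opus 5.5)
Your separation-lemma strategy works and is a different route from the paper's, once two points are repaired.

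\textbf{The obstacle you flag is not real.} Drop the conjugation/Bergman-metric route: as stated it is not uniform in $p_j$, since the isotropy groups move. Your second route is immediate. Since $h_j\to h$ in $\smoo^1(\overline{D})$ and $p_j\to z_0\in\overline{D}$, you get $dh_j(p_j)\to dh(z_0)$. Each $\sigma(dh_j(p_j))$ lies in the closed set $\Sone\setminus A$, so $\sigma(dh(z_0))\subset\Sone\setminus A$ by continuity of the spectrum. Hence $dh(z_0)-I$ is invertible, and $\|(dh_j(p_j)-I)^{-1}\|$ is bounded for large $j$ by continuity of inversion. Interior and boundary $z_0$ are handled identically. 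You never need $z_0$ to be an isolated fixed point of $h$; that alone would not control the $h_j$. Your integral identity supplies the uniform statement.

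\textbf{The segment may leave $\overline{D}$.} $\overline{D}$ is not convex near boundary points, so the identity cannot be used as written. Use instead a path $\gamma_j\subset D$ from $p_j$ to $q_j$ of length at most $C|p_j-q_j|$, inside a shrinking neighbourhood of $z_0$. In a local graph chart $D=\{y<\phi(x)\}$ with $\phi$ $L$-Lipschitz, move each point down by $(L+1)|p_j-q_j|$ in the $y$-direction and join the two translates by a segment. Then, working with real derivatives (on $D$ these are realifications of the complex Jacobians),
\[
0=\bigl(dh_j(p_j)-I\bigr)(q_j-p_j)+\int_0^1\bigl(dh_j(\gamma_j(t))-dh_j(p_j)\bigr)\gamma_j'(t)\,dt .
\]
The integral is $o(1)\,|q_j-p_j|$, because $dh_j\to dh$ uniformly on $\overline{D}$ and $dh$ is uniformly continuous there. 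So $q_j=p_j$ for large $j$. The packing step then gives the bound.

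\textbf{Comparison with the paper.} The paper never proves separation up to $\bdy D$.
\begin{itemize}
\item It first proves a boundary Schwarz-type fact. If $p\in\bdy D$ is fixed by $f$, then $\rho\circ f=h\rho$ for a defining function $\rho$ and some $h>0$. Thus $d_\R\rho(p)$ is an eigenvector of $d_\R f(p)^*$ with eigenvalue $h(p)>0$, and $df(p)$ has a positive real eigenvalue.
\item Combining this with arc-avoidance and the $\smoo^\infty(\overline{D})$ compactness of $H$, it shows that all fixed points of elements of $H$ with discrete fixed point set lie in one compact $K\subset D$.
\item Only then does it run the interior Rouch\'e argument of Theorem~\ref{T:main} on small balls about points of $K$.
\end{itemize}
Your inverse-function estimate uses only $\smoo^1$ convergence up to $\overline{D}$ and no holomorphy across $\bdy D$. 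It therefore handles boundary accumulation directly, making the defining-function lemma and the confinement step unnecessary; the argument is shorter and uniform. The paper's route gives, in exchange, the extra information that such fixed points stay uniformly away from $\bdy D$. It also reuses the holomorphic Rouch\'e argument already set up for Theorem~\ref{T:main}.
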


	The above two theorems have broad applicability and yield a complete solution to
	Problem~\ref{P:main} for large classes of domains. In particular, both theorems
	subsume Result~\ref{R:fridman}. It is also worthwhile highlighting that our
	methods work for more general classes of mappings as well. We now state some concrete instantiations
	of our main theorems. For more details and examples, see
	Section~\ref{S:examples}. 

	\begin{corollary}
		\label{C:main} Let $D \subset \C^{n}$ be a bounded balanced domain. If
		$f \in \A(D)$ has a discrete fixed point set then $\F(f)$ is finite.
	\end{corollary}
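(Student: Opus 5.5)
The plan is to derive the corollary from part (1) of Theorem~\ref{T:main}. For this it suffices to show that every automorphism of a bounded balanced domain extends holomorphically to a neighbourhood of $\overline{D}$. I would split into two cases, according to whether $f$ fixes the origin.

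\textbf{Case 1: $f(0)=0$.} Cartan's linearity theorem applies: a bounded balanced domain is circular, so an automorphism of a bounded circular domain fixing the origin is linear. Hence $f=A|_D$ for some $A\in GL_n(\C)$, which is entire and extends beyond $\overline{D}$. Part (1) of Theorem~\ref{T:main} then gives finiteness of $\F(f)$ whenever it is discrete.

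\textbf{Case 2: $f(0)\neq 0$.} In general $f$ need not extend, and here lies the main obstacle. I would use the structure theory of automorphism groups of bounded circular domains (Kaup, Cartan), rather than attack extension directly. First, if the isotropy orbit $\A(D)\cdot 0$ is discrete (for instance if $\A(D)$ is compact), then $\A(D)$ fixes $0$, since for bounded circular domains $\A(D)\cdot 0$ is a complex submanifold through $0$ invariant under rotations; so Case 1 covers everything. Otherwise $\A(D)\cdot 0$ is a positive-dimensional orbit, and by Kaup's results $\A(D)$ consists of maps that are rational (in fact birational/linear-fractional) and holomorphic on a neighbourhood of $\overline{D}$. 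This is the step I expect to need the most care. If the general statement is too strong, I would fall back on a direct argument for $f$ with $\F(f)\neq\emptyset$ discrete, as follows.

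Pick $p\in\F(f)$. By Result~\ref{R:Cartaneigen} the group generated by $f$ has compact closure $K$ in the isotropy group of $p$, since the iterates form a normal family and $df^k(p)$ stays bounded. A compact group of automorphisms with a common fixed point $p$ preserves $p$'s orbit structure, and the orbit $\A(D)\cdot 0$ argument shows that $0$ can be moved to $p$ by an automorphism $g$ that is itself rational and regular near $\overline{D}$. Then $g^{-1}fg$ fixes $0$, so it is linear by Case 1. Consequently $f=g\,(g^{-1}fg)\,g^{-1}$ extends holomorphically past $\overline{D}$, and Theorem~\ref{T:main}(1) finishes the proof.

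If $\F(f)=\emptyset$ there is nothing to prove, so in either case $\F(f)$ is finite.
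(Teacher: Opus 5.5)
Your reduction to Theorem~\ref{T:main}(1) is the right one. Case~1 is fine: Cartan's theorem says an automorphism of a bounded circular domain fixing $0$ is linear. The gap is Case~2, which carries all of the content.

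\textbf{The main route.} It is an unreferenced appeal to ``Kaup's results'' asserting that every automorphism is rational \emph{and} holomorphic on a neighbourhood of $\overline{D}$. You do not prove this, and you flag it yourself as doubtful. Rationality alone would not suffice. A rational map bounded on a domain can have an indeterminacy point on the boundary: $z_1/z_2$ on $\{|z_1|<|z_2|<1\}$ is bounded but does not extend past $0$. So holomorphy past $\overline{D}$ is a separate claim that also needs proof.

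\textbf{The fallback.} It does not close the gap, for two reasons.
\begin{itemize}
\item The step ``the orbit $\A(D)\cdot 0$ argument shows that $0$ can be moved to $p$'' is unjustified. Nothing you wrote shows that a fixed point of $f$ lies in $\A(D)\cdot 0$. The remark about $K$ preserving ``$p$'s orbit structure'' is not an argument.
\item The step again needs $g$ to be regular near $\overline{D}$. That is exactly the claim the fallback was supposed to bypass.
\end{itemize}
Moreover, if you \emph{could} produce $g\in\A(D)$ with $g(0)=p$, you would need neither Theorem~\ref{T:main} nor any regularity of $g$. Then $g^{-1}fg=L$ is linear, so $\F(f)=g\bigl(\ker(L-I)\cap D\bigr)$, and discreteness forces $\F(f)=\{p\}$. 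So the fallback rests entirely on an unproved conjugacy statement. Proving it would require something like conjugating the compact group $K$ into the isotropy group of $0$, via the structure theory of $\A(D)\cdot 0$. As written, it is not a proof.

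\textbf{What the paper does.} The missing ingredient is an extension theorem that handles all automorphisms at once, with no case split. A balanced domain contains $0$ and is invariant under the linear action of $\Sone$ by scalar multiplication. The only $\Sone$-invariant entire functions are constants, since the homogeneous parts of the expansion satisfy $f_k(e^{i\theta}z)=e^{ik\theta}f_k(z)$, so invariance kills every $f_k$ with $k\ge 1$. Hence the Ning--Zhang--Zhou theorem (Result~\ref{R:Ning}) applies with $\Omega_1=\Omega_2=D$. It shows that every $f\in\A(D)$, indeed every proper holomorphic self-map, extends holomorphically to a neighbourhood of $\overline{D}$. Theorem~\ref{T:main}(1) then gives the corollary directly.
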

    \begin{corollary}\label{C:main2}
        Let $D \subset \C^n$ be a bounded domain with smooth, real-analytic boundary
        and let $f \in \A(D)$ have a discrete fixed point set. Suppose that either $n = 2$ or that $f$
        extends continuously to $\overline{D}$ or that $D$ is pseudoconvex. Then $\F(f)$ is finite. Furthermore, if $n = 2$ then there is a positive integer $m$, depending only on $D$, such that for
		every $f \in \A(D)$ with discrete fixed point set, the number of fixed points
		of $f$ is at most $m$.
    \end{corollary}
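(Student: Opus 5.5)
The plan is to reduce Corollary~\ref{C:main2} to the two main theorems by invoking known boundary-regularity results for automorphisms of domains with real-analytic boundary. The key point is that real-analytic boundary gives holomorphic extension of automorphisms past $\overline{D}$ in each of the listed situations. Once that is in hand, part~1 of Theorem~\ref{T:main} gives finiteness, and part~2 gives the uniform bound once we produce a suitable compact group.

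The steps, in order, are as follows.

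First, for a bounded domain with smooth real-analytic boundary, a bounded domain with real-analytic boundary is of finite type (D'Angelo type). This is because a compact real-analytic hypersurface contains no germs of positive-dimensional complex varieties (Diederich--Fornaess).

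Second, I would split into the three cases of the hypothesis.

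If $D$ is pseudoconvex, condition $R$ holds by Catlin/Kohn subelliptic estimates, so $f$ extends smoothly to $\overline{D}$ by Bell--Ligocka. The reflection principle for real-analytic hypersurfaces then gives holomorphic extension past $\overline{D}$ (Baouendi--Jacobowitz--Treves, Diederich--Fornaess, Diederich--Pinchuk). I expect this is exactly what Result~\ref{R:extension} records.

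If $f$ merely extends continuously to $\overline{D}$, I would use the Diederich--Pinchuk theorem. It says that a proper holomorphic map between bounded domains with real-analytic boundaries that extends continuously to the closure extends holomorphically past the boundary. Apply it to $f$; the inverse handles itself.

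If $n=2$, Diederich--Pinchuk proved that proper holomorphic maps between bounded domains in $\C^2$ with real-analytic boundary extend holomorphically past $\overline{D}$ with no extra hypothesis.

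In all three cases $f$ extends holomorphically beyond $\overline{D}$, so Theorem~\ref{T:main}(1) yields that $\F(f)$ is finite.

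Third, for the uniform bound when $n=2$, I would take $H=\A(D)$ and show it is compact. If $\A(D)$ were noncompact, orbits would accumulate at a boundary point. Real-analytic boundary of finite type in $\C^2$ then forces, via Bedford--Pinchuk, $D$ to be biholomorphic to an ellipsoid $\{|z|^2+|w|^{2m}<1\}$. That case has to be treated separately. Either its automorphism group is explicit and the fixed points can be counted directly (a discrete fixed point set there is a single point, as in Result~\ref{R:viguecvx}, since these ellipsoids are convex), or I would check that the statement still needs only a bound for that explicit $D$. Otherwise $\A(D)$ is a compact Lie group, every element extends holomorphically past $\overline{D}$ by the $n=2$ case, and Theorem~\ref{T:main}(2) gives $m=m(D)$.

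The main obstacle is the third step: identifying the precise noncompactness classification needed in $\C^2$ with real-analytic boundary, and confirming that the exceptional convex models are covered by Vigué. The second step is mostly citation, but getting the exact hypotheses of the extension theorems right will need care.
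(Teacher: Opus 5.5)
Your proposal is correct and follows essentially the paper's argument. In each of the three cases you get holomorphic extension past $\overline{D}$ from the Diederich--Pinchuk and Diederich--Forn\ae ss extension theorems (or Result~\ref{R:extension} when $D$ is pseudoconvex), and Theorem~\ref{T:main}(1) then gives finiteness. For $n=2$ you use the same Bedford--Pinchuk dichotomy as the paper: either $\A(D)$ is compact and Theorem~\ref{T:main}(2) applies with $H=\A(D)$, or $D$ is biholomorphic to a convex model, where Result~\ref{R:viguecvx} allows at most one fixed point. The hedge in your last step can simply be dropped. A biholomorphism $\phi$ carries $\F(f)$ onto $\F(\phi\circ f\circ\phi^{-1})$, so discreteness and cardinality transfer to the convex model, $m=1$ works in that case, and nothing further needs checking.
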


	\begin{corollary}
		\label{C:meta} Let $D \subset \C^{n}$ be a bounded strongly pseudoconvex domain.
		Then there is a positive integer $m$, depending only on $D$, such that for
		every $f \in \A(D)$ with discrete fixed point set, the number of fixed points
		of $f$ is at most $m$.
	\end{corollary}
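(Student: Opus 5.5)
The plan is to deduce Corollary~\ref{C:meta} from Theorem~\ref{T:meta}. I will split into two cases according to whether $\A(D)$ is compact. First I have to be careful about the word ``smooth'', since the corollary as stated only says ``strongly pseudoconvex''. I will read strong pseudoconvexity, as is standard, as including a $\smoo^{2}$ (indeed smooth, for Theorem~\ref{T:meta}) strictly plurisubharmonic defining function. So $D$ is a bounded domain with smooth boundary.

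\textbf{Case 1: $\A(D)$ is noncompact.} By the Wong--Rosay theorem, $D$ is biholomorphic to the unit ball $\B$. The fixed point sets of an automorphism of $D$ and of its conjugate on $\B$ are in bijection. So it suffices to treat $\B$, which is convex, and Result~\ref{R:viguecvx} applies: a discrete nonempty fixed point set is a connected discrete set, hence a single point. Thus $m=1$ works in this case. (Alternatively, automorphisms of $\B$ are linear fractional maps and extend holomorphically past $\overline{\B}$, but Vigué's result is quicker.)

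\textbf{Case 2: $\A(D)$ is compact.} Here I take $H=\A(D)$ in Theorem~\ref{T:meta}; it is a compact (Lie) group by H.~Cartan. I then need to verify the two hypotheses.

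1. Every automorphism extends as a diffeomorphism to $\overline{D}$. This is Fefferman's theorem, or the Bell--Ligocka theorem via condition R, which holds for smoothly bounded strongly pseudoconvex domains by Kohn's subelliptic estimates.

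2. If $f_j\to f$ locally uniformly, then $f_j\to f$ in $\smoo^\infty(\overline{D})$. This is the main point to check, and I expect it to be the main obstacle, since it is a quantitative version of the extension theorem. I would use Bell's transformation rule for the Bergman projection and the Bergman kernel. Condition R gives estimates $\|f_j\|_{\smoo^k(\overline{D})}\le C_k$ uniform in $j$, with $C_k$ depending only on $D$. These follow from Bell's proof, where derivatives of $f_j$ are expressed through the Bergman projection of fixed functions composed with $f_j^{-1}$, and both $f_j$ and $f_j^{-1}$ lie in the compact group. Arzelà--Ascoli then upgrades local uniform convergence to $\smoo^\infty(\overline{D})$ convergence: every subsequence has a $\smoo^{k}$-convergent subsequence, whose limit must be $f$.

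Failing a clean citation of uniform estimates, I would fall back on the known result (Bell, or Greene--Krantz) that for compact $\A(D)$ on such domains the action on $\overline{D}$ is a smooth Lie group action. Continuity of a Lie group action in the $\smoo^\infty$ topology gives hypothesis 2 directly.

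With both hypotheses in hand, Theorem~\ref{T:meta} gives $m_0=m(D,\A(D))$. Combining the two cases, $m=\max(m_0,1)$ (or just $m_0$ in Case 2 and $1$ in Case 1) depends only on $D$.
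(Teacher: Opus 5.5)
Your proposal is correct and follows the paper's own proof. The paper uses the same Wong--Rosay dichotomy: in the ball case Vigué's convex result (Result~\ref{R:viguecvx}) gives $m=1$, and in the compact case Theorem~\ref{T:meta} is applied with $H=\A(D)$. Your explicit check of the hypotheses of Theorem~\ref{T:meta} (condition $R$ via Kohn, extension via Bell--Ligocka, and $\smoo^\infty(\overline{D})$ convergence, which the paper takes from Barrett's result) spells out what the paper only states in the discussion in Section~\ref{S:examples}.
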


	\begin{corollary}
		\label{C:meta2} Let $D \subset \C^{n}$ be a bounded domain with smooth
		boundary that satisfies condition $R$. If $f \in \A(D)$ has a discrete fixed
		point set then $\F(f)$ is finite.
	\end{corollary}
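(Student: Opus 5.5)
Corollary~\ref{C:meta2} will follow from Theorem~\ref{T:meta}, applied to the compact subgroup generated by $f$, together with a separate argument for the case in which that subgroup is not compact. We split according to whether the cyclic group $\langle f\rangle$ has compact closure in $\A(D)$.

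\emph{Case 1: the closure $H:=\overline{\langle f\rangle}$ is compact.} Domains with smooth boundary satisfying condition $R$ are known to satisfy the two hypotheses of Theorem~\ref{T:meta}.
\begin{itemize}
\item Every biholomorphism of $D$ extends smoothly to $\overline{D}$ (Bell--Ligocka), and applying this to $f^{-1}$ shows that the extension is a diffeomorphism.
\item Uniform convergence of automorphisms on compacta implies $\smoo^\infty(\overline{D})$ convergence. Here I would use Bell's transformation formula for the Bergman projection, $P(u\cdot (h\circ f))=u\cdot((Ph)\circ f)$, with $u=\det f'$. Condition $R$ gives uniform Sobolev estimates, so the coefficients $u_j=\det f_j'$ and then the components of $f_j$ are bounded in every $\smoo^k(\overline{D})$. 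Arzel\`a--Ascoli upgrades the convergence to $\smoo^\infty(\overline{D})$.
\end{itemize}
I will cite these as standard. Theorem~\ref{T:meta} then applies to $H$, and since $f\in H$ with $\F(f)$ discrete, $\F(f)$ is finite.

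\emph{Case 2: $\langle f\rangle$ is not relatively compact in $\A(D)$.} Since $\A(D)$ acts properly on the bounded domain $D$ (H. Cartan), a fixed point $p$ of $f$ would give $\{f^k\}\subset\{g: g(p)=p\}$. That isotropy group is compact, which is a contradiction. Hence $\F(f)=\emptyset$ in this case, and in particular it is finite.

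The main obstacle is verifying hypothesis (2) of Theorem~\ref{T:meta} from condition $R$ alone. I would proceed in two steps.
\begin{itemize}
\item First obtain uniform $\smoo^k(\overline{D})$ bounds on $u_j$ by feeding smooth test functions (e.g.\ $h=1$ and the coordinate functions times suitable $\smoo^\infty$ data) through Bell's formula.
\item Then recover bounds on the $f_j$ themselves by dividing by $u_j$. This uses the nonvanishing of $\det f'$ on $\overline{D}$, which is uniform because the inverses $f_j^{-1}$ converge too and satisfy the same bounds.
\end{itemize}
If this proves delicate, an alternative is to note that only the compact group $H$ matters. The conclusion of Theorem~\ref{T:meta} for $H$ then needs only $\smoo^\infty$ equicontinuity along $H$, which follows from the same Bell estimates.
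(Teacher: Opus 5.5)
Your proposal is correct and is essentially the paper's argument. The paper takes $H$ to be the isotropy subgroup of a fixed point $z_0\in\F(f)$, which is compact by Montel's theorem and Cartan's theorem, and applies Theorem~\ref{T:meta}; its hypotheses come from Bell--Ligocka and Barrett, the same facts you cite as standard. Your case split rearranges this reasoning: your Case~2 is exactly the compactness of that isotropy group, and $\overline{\langle f\rangle}$ is a closed subgroup of it (the paper's choice of $H$ additionally gives a uniform bound for all automorphisms fixing $z_0$).
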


	The paper is organized as follows. In Section~\ref{S:prereqs} we collect the
	preliminary material used throughout the paper.  We prove
	Theorems~\ref{T:main} and \ref{T:meta} in Section~\ref{S:proof}.
    Several classes of domains to which these theorems apply, including the strongly pseudoconvex
	case and other smoothly bounded domains satisfying suitable extension
	hypotheses are discussed in Section~\ref{S:examples}. In this section, we also discuss 
    results for more general maps and speculate on other possible extensions of our main results.







	\section{Necessary results}
	\label{S:prereqs}

	In this section, we discuss some results and lemmas that we will need in the proof
	of our main results. For a bounded domain
	$D$ in $\C^{n}$, we say that an automorphism $f$ of $D$ extends \textit{holomorphically
	beyond the boundary} (or \textit{past the boundary} or \textit{beyond $\overline{D}$}) if there exists an open neighborhood $U$ of $\overline{D}$
	and a holomorphic map $\tilde{f}: U \to \C^{n}$ such that $\tilde{f}|_{D} = f$.
	If, furthermore, $\tilde{f}$ is injective on $U$ we say $f$ \textit{extends
	biholomorphically beyond the boundary}  or that $f$ \textit{extends
	biholomorphically to $U$}. The first ingredient we need in the proof of Theorem~\ref{T:main}
	is the uniform extension of automorphisms beyond the boundary. This is Theorem~\ref{thm: uniform extension} below.
	We will use Vitali's theorem in the proof which we recall now.
    \begin{definition}
       A subset $S$ of a domain $D \subset \mathbb{C}^{n}$ is called a \textit{set of
	   uniqueness} if every holomorphic function $f$ on $D$ that vanishes on $S$ is identically
	   zero on $D$. 
    \end{definition}

    For example, any nonempty open subset of $D$ is a set of uniqueness
	   by the identity Theorem.

	\begin{result}[Vitali's theorem]
  Let $\{f_n\}$ be a sequence of holomorphic functions on a domain $D$, and let
  $S \subset D$ be a set of uniqueness. Suppose that $\{f_n\}$ is uniformly bounded on $D$
  and that $f_n(a)$ converges for every $a \in S$. Then $\{f_n\}$ converges uniformly on
  every compact subset of $D$.
\end{result}

\noindent
See {\cite[Chapter~1, Proposition~7]{Narasimhan1995book}}.

	We now state some results about the structure of the automorphism group of a
	bounded domain due to H. Cartan. 
\begin{result}[Cartan]\label{R:cartan}
Let $D$ be a bounded domain in $\C^n$. Let $\{f_k\}$ be a sequence of automorphisms
of $D$ converging uniformly on compact subsets to a map $f:D\to\C^n$. Then the following
are equivalent:
\begin{enumerate}
	\item $f \in \A(D)$.
	\item $f(D) \not\subset \partial D$.
	\item There exists a point $a \in D$ such that the determinant of the Jacobian matrix
	of $f$ at $a$ is nonzero.
\end{enumerate}
\end{result}

\noindent
See {\cite[Chapter~5, Theorem~4]{Narasimhan1995book}}.

\begin{result}[Cartan]
Let $D \subset \C^n$ be a bounded domain. Then the group $\A(D)$, endowed with the
compact-open topology, carries the structure of a Lie group such that the natural map
$\A(D)\times D \to D$ is real-analytic.
\end{result}

\noindent
See {\cite[Chapter~9]{Narasimhan1995book}}.

	The following result concerns the holomorphic extension of smooth $CR$-diffeomorphisms
	beyond the boundary.

	\begin{result}
		[\cite{Baouendietal1985}] \label{R:extension} Let $D$ be a smoothly bounded domain
		in $\C^{n}$ $(n\geq 2)$ with real analytic boundary. Suppose that either $D$
		is pseudoconvex or condition $R$ holds on $D$, then any $f\in \textsf{Aut}(D)$
		extends biholomorphically beyond the boundary
	\end{result}

     Using this boundary extension result, Chen and Jang
	proved a uniform extension theorem for automorphisms of $D$ when
	$\A(D)$ is compact.

	\begin{result}[\cite{chen1999Jang}]\label{R:Uniform extension} Let $D$ be a domain
		satisfying the hypotheses of Result~\ref{R:extension}. Assume
		that the automorphism group $\textsf{Aut}(D)$ is compact. Then there exists
		an open neighborhood $U$ of $\overline{D}$ such that each
		$f \in \textsf{Aut}(D)$ extends as biholomorphically to $U$.
	\end{result}

	A careful reading of the proof of the above result reveals that the same conclusion
	holds under only the assumption that automorphisms extend holomorphically beyond
	the boundary with no assumptions whatsoever on the domain other than boundedness.
	To the best of our knowledge, this simple, but quite general, and very useful observation has not
	been recorded in the literature. This observation combined with simple
	topology shows that given a compact subgroup of the automorphism group, we can
	find a single open set that contains $\overline{D}$ to which all automorphisms
	in the subgroup extend as automorphisms. This, in the restricted setting of strongly
	pseudoconvex domains with real-analytic boundary, was the first ingredient in
	the proof of Result~\ref{R:fridman} and is the first ingredient in the proof
	of Theorem~\ref{T:main}.

	\begin{theorem}
		\label{thm: uniform extension} Let $D$ be a bounded domain in $\C^{n}$ and let
		$H$ be a compact subgroup of $\A(D)$. Assume that every $f\in H$ extends
		holomorphically beyond $\overline{D}$. Then there is a single domain $U \supset \overline{D}$ such that all automorphisms in $H$ extend to be automorphisms
		of $U$.
	\end{theorem}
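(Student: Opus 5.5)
The plan is to follow the Chen--Jang strategy: a Baire category argument shows that extensions are uniform on a small piece of $H$, group translation spreads this over all of $H$, and a neighborhood-shrinking argument then turns extensions into automorphisms of a common domain.

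\emph{Step 1: Baire category.} Fix a decreasing sequence of neighborhoods $U_j \downarrow \overline{D}$, say $U_j=\{z:\operatorname{dist}(z,\overline D)<1/j\}$. Let $H_{j,M}$ be the set of $f\in H$ that extend holomorphically to $U_j$ with $\sup_{U_j}|\tilde f|\le M$. By hypothesis $H=\bigcup_{j,M}H_{j,M}$.

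Each $H_{j,M}$ is closed in $H$. To see this, take $f_k\in H_{j,M}$ with $f_k\to f$ in $H$. The extensions $\tilde f_k$ are uniformly bounded on the connected open set $U_j$ and converge on $D$, which is a set of uniqueness. Vitali's theorem then gives locally uniform convergence on $U_j$ to a holomorphic map, which extends $f$ and is bounded by $M$.

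Since $H$ is compact Hausdorff, hence Baire, some $H_{j,M}$ contains a nonempty open set $V\subset H$.

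\emph{Step 2: Spreading by compactness.} Cover $H$ by finitely many translates $g_1V,\dots,g_rV$. If $f=g_i h$ with $h\in V$, then $\tilde f=\tilde g_i\circ\tilde h$ on $\tilde h^{-1}(W_i)$, where $W_i\supset\overline D$ is a neighborhood on which $\tilde g_i$ is defined.

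The sets $\tilde h^{-1}(W_i)$ must contain a common neighborhood of $\overline D$ for all $h\in V$. This follows from equicontinuity, since the family $\{\tilde h\}$ is uniformly bounded on $U_j$. Indeed, Cauchy estimates on a slightly smaller $U_{j'}$ make the family uniformly Lipschitz there. Moreover $\tilde h(\overline D)=\overline D$. Hence a small $\delta$-neighborhood of $\overline D$ is mapped into the $L\delta$-neighborhood of $\overline D$, which lies inside $W_i$ for $\delta$ small.

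This produces a single neighborhood $U'\supset\overline D$ and a bound $M'$ such that every $f\in H$ extends to $U'$ with $|\tilde f|\le M'$ and uniform Lipschitz constant. One detail remains: each $\tilde h^{-1}(W_i)$ should contain $\overline D$ with connected extension. The map $\tilde g_i\circ\tilde h$ agrees with $f$ on $D$, so the identity theorem makes these extensions coherent.

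\emph{Step 3: Producing automorphisms of a common domain.} The family $\mathcal F=\{\tilde f: f\in H\}$ on $U'$ is compact in the compact-open topology, by Vitali as above. On a smaller neighborhood we have $\tilde f^{-1}\circ\tilde f=\mathrm{id}$, since $f^{-1}\in H$ and the identity theorem applies. Hence each $\tilde f$ is injective near $\overline D$.

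Define
\[
U=\bigcap_{f\in H}\tilde f(U''),
\]
or alternatively the connected component containing $D$ of $\{z: \tilde f(z)\in U'' \ \forall f\in H\}$, where $U''\subset U'$ is a neighborhood on which all compositions make sense. This set is invariant under every $\tilde f$ because $H$ is a group and $\tilde f\circ\tilde g=\widetilde{fg}$ wherever both sides are defined. It is a neighborhood of $\overline D$ by the uniform Lipschitz bounds: if $\operatorname{dist}(z,\overline D)<\delta/L$, then $\tilde f(z)$ lies within $\delta$ of $\overline D$ for all $f$. It is open since an intersection over a compact equicontinuous family of open sublevel conditions is open once we use the distance formulation.

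I expect the bookkeeping in Step 3 to be the main obstacle. It requires making compositions well defined and ensuring $U$ is open, connected and mapped onto itself by every $\tilde f$. I would handle it by working with the set
\[
\{z\in U':\operatorname{dist}(\tilde f(z),\overline D)<\epsilon\ \forall f\in H\},
\]
taking its component containing $\overline D$. Compactness of $\mathcal F$ makes the supremum over $f$ continuous, which gives openness. Invariance then follows from the group law, and $\tilde f^{-1}$ provides the inverse, so each $\tilde f$ restricts to an automorphism of $U$.
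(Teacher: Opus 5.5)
Your proof is correct, and Steps 1 and 2 follow the paper's skeleton: Baire category on sets of maps with bounded extensions to $D_j$, closedness via Vitali, then translation plus compactness. You differ in two small but useful ways.

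\begin{itemize}
\item You leave injectivity out of the Baire sets. Closedness then needs only Vitali, not Vitali plus Hurwitz. You recover injectivity afterwards from $\widetilde{f^{-1}}\circ\tilde f=\mathrm{id}$ on a uniform connected neighborhood. The paper instead applies this composition trick pointwise, before the Baire step.
\item Your Cauchy-estimate argument makes quantitative the paper's brief ``translate this open set'' step, where a common neighborhood on which every $\tilde g_i\circ\tilde h$ is defined is genuinely needed. Strictly, you do not get a global Lipschitz bound on the possibly nonconvex $U_{j'}$. You only need the derivative bound along segments of length $<\delta$ starting in $\overline D$, together with $\tilde h(\overline D)\subset\overline D$, and that suffices.
\end{itemize}

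The real divergence is Step 3. The paper fixes $U_2\Subset U_1$ and uses a path-crossing argument to show $f(U_2)\subset g(U_1)$ for $g$ near $f$. Compactness then gives a common $Q\subset h(U_1)$ for all $h$, and $U$ is a component of the interior of $\bigcap_h h(U_1)$; the invariance $h(U)\subset U$ is stated without further detail. You instead take the component containing $\overline D$ of $\{z:\phi(z)<\epsilon\}$, where
\[
\phi(z)=\sup_{f\in H}\operatorname{dist}\bigl(\tilde f(z),\overline D\bigr).
\]
Openness comes from equicontinuity, and invariance is automatic because $\phi\circ\tilde g=\phi$ by the group law. This makes the $H$-invariance and the bijectivity on $U$ (with inverse $\widetilde{g^{-1}}$) a one-line check and avoids the topological argument. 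The cost is reliance on metric estimates, whereas the paper's final step is purely topological.

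One wording fix: $\tilde f\circ\tilde g=\widetilde{fg}$ need not hold ``wherever both sides are defined,'' only on the component containing $D$ of that set. Your argument actually uses it on a connected neighborhood $U''$ with $\tilde g(U'')\subset U'$ for all $g\in H$, where the identity theorem does give it. Say explicitly that $U''$ is chosen connected, for example as a $\delta$-neighborhood of $\overline D$, and that $\epsilon$ is small enough that the $\epsilon$-neighborhood of $\overline D$ lies in $U''$.
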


	\begin{proof}
		First we show that each $f \in H$ actually extends biholomorphically beyond the
		boundary. Let $F : U \to \mathbb{C}^{n}$ be a holomorphic extension of $f$
		to an open neighborhood $U$ of $\overline{D}$, and let $G : V \to \mathbb{C}^{n}$
		be a holomorphic extension of $f^{-1}$ to an open neighborhood $V$ of $\overline
		{D}$. Since $f$ is a biholomorphism from $D$ onto $D$, we have
		\[
			F(\overline{D}) = \overline{D}\subset V .
		\]
		Since $F$ is continuous, the set $F^{-1}(V)$ is open in $U$. Hence
		\[
			U' = U \cap F^{-1}(V)
		\]
		is an open neighborhood of $\overline{D}$. For every $z \in U'$, we have $F(z
		) \in V$, and therefore the composition $G \circ F$ is well-defined and
		holomorphic on $U'$. Let $W$ be the connected component of $U'$ that contains
		$\overline{D}$. Then $W$ is an open and connected neighborhood of $\overline{D}$,
		and the map $G \circ F : W \to \mathbb{C}^{n}$ is holomorphic. For $z \in D$,
		we have
		\[
			G(F(z)) = G(f(z)) = z .
		\]
		Thus $G \circ F$ agrees with the identity map on $D$. By the identity
		theorem for holomorphic maps in $\mathbb{C}^{n}$, it follows that
		\[
			G \circ F|_{W} = \mathrm{id}_{W} .
		\]
		Hence $F|_{W}$ is injective and $F(W) \subset V$. Therefore
		\[
			F|_{W} : W \to F(W)
		\]
		is a biholomorphism, and $F(W)$ is an open neighborhood of $\overline{D}$.

		Define for each $n \in \N$
		\[
			D_{j} := \{z \in \C^{n}: \textsf{dist}(z, \overline{D}) < 1/j\}.
		\]
		For each $f \in H$, we can find an open set $D_{j}$ such that $f$ extends to
		be an injective and bounded holomorphic map in $D_{j}$. Set
		\[
			E_{j,m}:= \{g \in H: g \text{ extends biholomorphically to $D_j$ and is bounded
			by $m$ on $D_j$} \}.
		\]
		Then $H = \bigcup_{j,m}E_{j,m}$. Notice that each $E_{j,m}$ is closed. For if
		$f_{k} \in E_{j,m}$ and $f_{k} \to f$ uniformly on compact subsets of $D$,
		then as the $f_{k}$'s are uniformly bounded, Vitali's theorem implies that $f
		_{k} \to f$ uniformly on compact subsets of $D_{j}$. Hurwitz's theorem (see
		\cite[p.80]{Narasimhan1995book}) now
		guarantees that $f$ is injective on $D_{j}$ and it is clear that $f$ is also
		bounded by $m$. Thus $f \in E_{j,m}$.

		As $\textsf{Aut}(D)$ is a Lie group, it is metrizable. Thus $H$ can be viewed
		as a complete metric space. By the Baire category theorem, there is an open set
		$V$ in $H$ that is fully contained in some $E_{j,m}$. We have found an open
		subset $V$ of $H$ such that every automorphism in $V$ extends biholomorphically 
        to a fixed $D_{j}$ and such that the extended map
		is bounded by a fixed $m$. We can now translate this open set to any given automorphism
		in $H$. Any automorphism in the translated set will also extend as a bounded
		injective holomorphic map to some (possibly different) $D_{j}$. The constant
		$m$ can change as well. Nevertheless, by the compactness of $H$, we can cover
		$H$ by finitely many such open sets, and thus we can find a single domain
		$\OM \Supset \overline{D}$ to which each automorphism in $H$ extends as a
		bounded injective holomorphic map. In fact, we have obtained a uniform bound.
		It is now easy to see that the topology on $H$ inherited from $\A(D)$ coincides
		with the one inherited from $\hol(\OM,\C^{n})$.

		Now we will show each $h \in H$ extends to be an automorphism of a fixed $U$
		that contains $\overline{D}$. Fix $f \in H$ and let
		$U_{2} \Subset U_{1} \Subset \OM$ be domains that contain $\overline{D}$. Choose
		an open set $V$ around $f$ in $H$ such that for $g \in V$, we have
		$g(\bdy U_{1}) \cap f(\overline{U}_{2}) = \emptyset$. We claim $f(U_{2}) \subset
		g(U_{1})$ for all $g \in V$. If not, then there is a point $z \in f(U_{2})$ such that $z \notin
		g(U_{1})$. We can find a path $\gamma$ in $f(U_{2})$ that connects $z$ to a
		point in $D$. But $D \subset g(U_{1})$. This means that $\gamma$ must
		intersect $g(\bdy U_{1})$. This is a contradiction. We have now shown that
		$g(U_{1})$ contains a fixed open set, namely $f(U_{2})$, that in turn
		contains $\overline{D}$.

		As $H$ is compact, we can find a single open set $Q$ that contains $\overline
		{D}$ and such that each $h \in H$ satisfies $Q \subset h(U_{1})$. Let $U$ be
		the connected component that contains $D$ of the interior of the
		intersection of all the sets $h(U_{1})$ as $h$ ranges over $H$. Then $U$ is a
		domain that contains $Q$. Furthermore, for each $h \in H$, we have
		$h(U) \subset U$. In particular, $h^{-1}(U) \subset U$. Thus $h$ is an automorphism
		of $U$.
	\end{proof}

	We need one more result that bears H. Cartan's name.

	\begin{result}
		[Cartan--Carathéodory --Kaup--Wu Theorem, \cite{Wu1967}, Theorem 2.1.21 of \cite{Abate1989notes}]\label{R:Cartaneigen}
		Let $D$ be a bounded domain in $\Cn$ and $f\in \hol(D,D)$ with $f(z_{0})=z_{0}$,
		then
		\begin{enumerate}
			\item the set of all eigenvalues of $df(z_0)$ are contained in the
				closed unit disc $\overline{\D}$;

			\item $|\det df(z_0)|\leq 1$;

			\item $df(z_0)=I$, the identity linear map if and only if $f$ is the
				identity map;

			\item $T_{z_0}D$ admits a $df(z_0)$-invariant splitting $T_{z_0}D=L_{U}\oplus
				L_{N}$ such that the spectrum of $df(z_0)|_{L_N}$ is contained in $\D$,
				the spectrum of $df(z_0)|_{L_U}$ is contained in $\bdy \D$ and
				$df(z_0)|_{L_U}$ is diagonalizable;

			\item $|\det df(z_0)| = 1$ if and only if $f$ is an automorphism.
		\end{enumerate}
	\end{result}

	In our proof of the main theorems, we will use the fact that the fixed point set
	$\F(f)$ is a complex submanifold of $D$ whenever $D$ is a bounded domain. This
	was proved by Vigué using ideas of Bedford and H. Cartan.

	\begin{result}
		[Vigué \cite{Vigue1986}]\label{R:vigue} Let $D$ be a bounded domain in
		$\C^{n}$ and let $f: D \rightarrow D$ be a holomorphic mapping. Then $\F(f)$
		is a complex submanifold of $D$. If $a \in \F(f)$, its tangent space $T_{a}(\F(f))$ is given by
		\[
			\left\{v \in \mathbb{C}^{n} \mid df(a) \cdot v=v\right\} .
		\]
	\end{result}

	We now state a version of Rouch\'e's theorem in several complex variables
	that we will use in the proof of our main results.

	\begin{result}[Rouché's Theorem in higher dimensions]
		\label{R:Rouché} Let $f, g :U \to \C^{n}$ be two holomorphic maps defined on a
		neighborhood $U$ of some ball $\overline{\B}(a,r) \subset \C^n$ such that
		\[
			\|g(z)-f(z)\| < \|f(z)\|, \ \text{for all }z \in \bdy \B(a,r).
		\]
		Then $f$ and $g$ have the same number of zeros in $\B(a,r)$, counted with
		multiplicities.
	\end{result}

	\begin{remark}
		A more general version for arbitrary domains instead of
		balls can be found in Lloyd's paper \cite{Lloyd1975} and the proof relies on the notion of the topological degree of a
		map. The book \cite[Appendix~B]{Milnor1968} deals only with domains with smooth boundary and the version stated above follows quite easily from the first two lemmas in Appendix
		B there. 
	\end{remark}

    \begin{remark}
        We will \textit{not} give the definition of multiplicity here but in the sequel we require only one fact: 
        the multiplicity at a point where the derivative is non-singular is $1$.
    \end{remark}

	We now come to the arc-avoidance lemma. This lemma is related to
	the famous lonely runner conjecture in Diophantine
	approximation. While this conjecture is still not fully settled, the integer analogue
	of the conjecture, dubbed the lonely rabbit problem has been fully solved. For
	$n \in \mathbb{N}$, the \emph{Rabbit value} of $n$ is defined by
	\[
		\textsf{Rab}(n) := \inf_{v_1,\dots,v_n \in \mathbb{R}\setminus\mathbb{Z}}\; \sup
		_{m \in \mathbb{Z}}\; \min_{1 \le j \le n}\| m v_{j} \|,
	\]
	where $\|x\| := \min_{m\in\mathbb{Z}}|x-m|$ denotes the distance from $x$ to the
	nearest integer. In \cite{Wills1968},
	Wills proved that
	\[
		\textsf{Rab}(1) = \frac{1}{3}, \qquad \frac{1}{2n^{2}}\le \textsf{Rab}(n) \le
		\frac{1}{w(n)}\quad \text{for all }n \ge 2.
	\]
	For our purposes, all we need is the fact that $\textsf{Rab}(n) > 0$. The
	following lemma is easily seen to be equivalent to the claim that
	$\textsf{Rab}(n) > 0$.

	\begin{lemma}\label{T:arc} Fix an integer $n\ge 1$. Then there exists an open arc
		$L\subset \Sone$ containing $1$, depending only on $n$, such that for every choice
		\[
			\lambda_{1},\dots,\lambda_{n}\in \Sone\setminus\{1\}
		\]
		there exists $k\in \N$ with
		\[
			\lambda_{j}^{k}\notin L\qquad (j=1,\dots,n).
		\]
	\end{lemma}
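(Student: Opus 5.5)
Write $\lambda_j = e^{2\pi i v_j}$ with $v_j \in \R\setminus\mathbb{Z}$; this is possible because $\lambda_j \neq 1$. Then $\lambda_j^k = e^{2\pi i k v_j}$, and $\lambda_j^k$ lies in the arc $L_\delta := \{e^{2\pi i t} : \|t\| < \delta\}$ exactly when $\|k v_j\| < \delta$. The lemma is therefore equivalent to finding a $\delta>0$, depending only on $n$, such that for every choice of $v_1,\dots,v_n \in \R\setminus\mathbb{Z}$ some $k\in\N$ satisfies $\min_j \|k v_j\| \ge \delta$. The plan is to take $\delta := \textsf{Rab}(n)/2$ and $L := L_\delta$, using Wills' bound $\textsf{Rab}(n) \ge 1/(2n^2) > 0$ (or $\textsf{Rab}(1)=1/3$ when $n=1$). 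This $L$ is an open arc containing $1$, and it depends only on $n$.

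Now fix $v_1,\dots,v_n$. By the definition of $\textsf{Rab}(n)$ as an infimum, we have
\[
\sup_{m\in\mathbb{Z}} \min_j \|m v_j\| \ge \textsf{Rab}(n) > \delta,
\]
so some $m\in\mathbb{Z}$ satisfies $\min_j\|m v_j\| > \delta$. We have $m \neq 0$, since $\|0\cdot v_j\| = 0$. Because $\|{-x}\| = \|x\|$, we may replace $m$ by $|m|$, so $k := |m| \in \N$ works. Then $\|k v_j\| > \delta$ for every $j$, which means $\lambda_j^k \notin L$ for $j=1,\dots,n$.

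The only substantive input is the positivity $\textsf{Rab}(n) > 0$, which we take from Wills. Halving is needed because the supremum need not be attained, and that is why $\delta$ is chosen strictly below $\textsf{Rab}(n)$. If one wanted a self-contained argument instead of citing Wills, the main obstacle would be proving $\textsf{Rab}(n)>0$ directly. In that case I would argue by Dirichlet/Kronecker-type pigeonholing: show that for $k$ ranging over $1,\dots,N$ with $N$ depending on $n$, the points $k v_j \bmod 1$ cannot all be simultaneously within $\delta$ of $0$ for every $k$. One would handle rational $v_j$ with small denominators separately from the remaining cases via an averaging (Weyl-type) bound on $\frac1N\sum_k \#\{j : \|kv_j\|<\delta\}$, which is less than $1$ when $\delta \ll 1/n$. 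Citing Wills avoids this entirely.
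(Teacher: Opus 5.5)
Your proposal is correct and follows the paper's route. The paper gives no separate proof: it only remarks that the lemma is equivalent to $\textsf{Rab}(n)>0$ and cites Wills. You write out the needed direction of that equivalence carefully, correctly noting that $m\neq 0$, that $\|{-x}\|=\|x\|$ lets you take $k=|m|\in\N$, and that choosing $\delta<\textsf{Rab}(n)$ handles a supremum that may not be attained. The closing pigeonhole/Weyl sketch is only a heuristic and is not needed, so it does not affect the argument.
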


  The final tool we need is the continuity of eigenvalues. Since this is
  standard, we will not give a proof here and use the following lemma without
  explicit mention in the proof of our main results.

\begin{lemma}[Continuity of the spectrum in the Hausdorff metric]
Let $\mathcal K(\C)$ denote the set of all nonempty compact subsets of $\C$,
equipped with the Hausdorff metric $d_H$. Then the map
\[
\sigma : M_n(\C)\to \mathcal K(\C), \qquad A\mapsto \sigma(A),
\]
is continuous. Equivalently, if $A_k\to A$ in $M_n(\C)$, then
\[
d_H\bigl(\sigma(A_k),\sigma(A)\bigr)\to 0.
\]
\end{lemma}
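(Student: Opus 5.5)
The plan is to prove the two halves of Hausdorff convergence separately. Fix $A\in M_n(\C)$ and a sequence $A_k\to A$. We show two things:
\begin{enumerate}
\item every eigenvalue of $A_k$ is eventually within $\varepsilon$ of $\sigma(A)$;
\item every eigenvalue of $A$ is eventually within $\varepsilon$ of $\sigma(A_k)$.
\end{enumerate}
Together these give $d_H(\sigma(A_k),\sigma(A))\to 0$. The tool throughout is the characteristic polynomial $p_B(z)=\det(zI-B)$. Its coefficients are polynomial in the entries of $B$, so $p_{A_k}\to p_A$ coefficientwise, and hence uniformly on compact subsets of $\C$.

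For the first half, note that all eigenvalues of all $A_k$ lie in the disc $|z|\le \sup_k\|A_k\|=:R<\infty$, since $|\lambda|\le\|B\|$ for the operator norm. Let $K_\varepsilon=\{z:|z|\le R,\ \textsf{dist}(z,\sigma(A))\ge\varepsilon\}$. This set is compact and $p_A$ has no zeros on it, so $|p_A|\ge\delta>0$ on $K_\varepsilon$. By uniform convergence, $|p_{A_k}|>0$ on $K_\varepsilon$ for all large $k$. Hence $\sigma(A_k)$ lies in the $\varepsilon$-neighbourhood of $\sigma(A)$ for large $k$.

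For the second half, take any eigenvalue $\mu$ of $A$ and choose $0<r<\varepsilon$ such that the closed disc $\overline{\D}(\mu,r)$ contains no other root of $p_A$. On the circle $|z-\mu|=r$, $p_A$ is bounded below by some $\eta>0$. For large $k$ we have $|p_{A_k}-p_A|<\eta$ on this circle, so the one-variable Rouch\'e theorem (the case $n=1$ of Result~\ref{R:Rouché}) shows that $p_{A_k}$ has a zero in $\D(\mu,r)$. Thus $\mu$ is within $\varepsilon$ of $\sigma(A_k)$. Since $\sigma(A)$ is finite, one $k_0$ works for all $\mu$ at once.

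Combining the two halves gives the continuity of $\sigma$. No step is a real obstacle. The only points needing care are the uniform bound on $\sigma(A_k)$, which makes $K_\varepsilon$ compact, and the use of finiteness of $\sigma(A)$ to make the choice of $k_0$ uniform in $\mu$.
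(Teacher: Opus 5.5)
Your proof is correct and complete. The paper gives no proof of this lemma (it is quoted as standard and then used without mention), so there is no competing argument to compare with. Your argument is the standard one: coefficientwise, hence locally uniform, convergence of characteristic polynomials; a compactness argument showing that $\sigma(A_k)$ eventually lies near $\sigma(A)$; and the one-variable Rouch\'e theorem for the reverse inclusion.

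The half the paper actually relies on, in the proofs of Theorems~\ref{T:main} and~\ref{T:meta}, is your second (Rouch\'e) half. It guarantees that every eigenvalue of the limiting derivative ($df(z_0)$, resp.\ $df(p)$) is a limit of eigenvalues of the derivatives $df_k^{m_k}$ at the approximating fixed points. Consequently the condition that these eigenvalues lie in the closed set $\Sone\setminus L$ passes to the limit, which rules out the eigenvalue $1$.
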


	\section{Proofs of main results}
	\label{S:proof}

	\noindent
	\textbf{Proof of Theorem \ref{T:main}}

	The first part follows from standard facts about analytic varieties. Let $f$ be an automorphism of $D$ with a
	discrete fixed point. Let $g$ be the holomorphic extension of $f$ to $U$ where $U$
	is some bounded domain that contains $\overline{D}$. Suppose that $\F(f)$ is
	infinite in $D$. Then there exists a sequence $\{z_{n}\} \subset \F(f)$ converging
	to a point $z_{0} \in \partial D$. Note that both $z_{n}$ and $z_{0}$ are
	fixed points of $g$. Furthermore, the point $z_{0}$ is \textbf{not} an isolated
	point $\F(g)$. In a small open set $V$ around $z_0$, $\F(g)$ decomposes into a finite union of irreducible analytic varieties. Let us call the component that contains infinitely many points of the \textit{set} 
    $\{z_n\}$ as $X$. It follows from standard results on the structure of
	analytic varieties that $\textsf{dim}_{z_0}(X) \geq 1$ (see \cite[p.199]{Loja1991}).
	This contradicts the fact that $\F(f)$ is discrete. It follows that $\F(f)$ is
	finite.
	\medskip

	For the uniform bound, first choose $U \Supset \overline{D}$ so that all automorphisms
	in $H$ extend to be automorphisms of $U$. It suffices to show that for each
	point $z_0 \in U$ there exists a small ball centered at $z_0$ such that each
	$f \in H$ has at most one fixed point in the small ball. This is actually immediate
	from Whitehead's theorem, but we will give a new proof that does
	not rely on this fact. Assume, to the contrary, that there exists a point $z_{0}
	\in U$ for which there is a sequence $\{f_{k}\} \subset H$ such that each $f_{k}$
	has at least two distinct fixed points $a_{k}$ and $b_{k}$ in the Euclidean ball
	$\B(z_{0},1/k)$. Since each $f_{k}$ is an automorphism of $D$, the Cartan--Carathéodory--Kaup--Wu
	theorem (Result \ref{R:Cartaneigen}) implies that
	$df_{k}(a_{k})$ and $df_{k}(b_{k})$ have all eigenvalues on the unit circle  and, by Result~\ref{R:vigue},
    none of them equals $1$. Denote
	the eigenvalues of $df_{k}(a_{k})$ by
	\[
		\lambda_{1k},\lambda_{2k},\ldots,\lambda_{nk}.
	\]
	All these eigenvalues lie on the unit circle, By Lemma~\ref{T:arc}, there exists
	a small arc $L$ on the unit circle containing $1$, depending only on $n$ and
	positive integers $m_{k}$ such that
	\[
		\lambda_{jk}^{m_k}\notin L, \qquad j=1,2,\ldots,n.
	\]
	Consider now the sequence of automorphisms $f_{k}^{m_k}$. Observe that $f_{k}^{m_k}$
	has at least two distinct fixed points $a_{k}$ and $b_{k}$ in the Euclidean ball
	$B(z_{0},1/k)$, and the eigenvalues of $df_{k}^{m_k}(a_k)$ are $\lambda_{jk}^{m_k}$.

	Since $U$ is bounded, we can apply Montel's theorem to pass to a subsequence and assume that the sequence $f_{k}^{m_k}$ converges to a holomorphic map $f \in \hol(U,U)$. Since $a_{k}$
	and $b_{k}$ are fixed points of $f_{k}^{m_k}$, we have $f(z_{0})=z_{0}$.
	Suppose that $z_{0}$ is an isolated fixed point of $f$. Then there exists $r>0$
	such that $z_{0}$ is the only fixed point of $f$ in $\B(z_{0},r)$. Since
	$f_{k}^{m_k}\to f$ is uniformly on $\overline{\B}(z_{0},r)$, for sufficiently large
	$k$, we have
	\[
		\|f_{k}^{m_k}(z)-f(z)\|<\|f(z)-z\| \quad \text{for all }z\in\partial\B(z_{0},
		r).
	\]
	Hence, the map $f_{k}^{m_k}$ has no fixed points on $\partial\B(z_{0},r)$. Again by Result~\ref{R:vigue}, none of the eigenvalues of 
    $df(z_0)$ can be $1$. Therefore the multiplicity of $f(z) - z$ at $z_0$ is $1$.  
	By Rouché's theorem, Result~\ref{R:Rouché}, $f-I$ and
	$f_{k}^{m_k}-I$ have the same number of zeros, counted with multiplicities, in $\B(z_{0},r)$. Thus $f$ and $f
	_{k}^{m_k}$ have the same number of fixed points in $\B(z_{0},r)$. Since
	$f_{k}^{m_k}$ has at least two fixed points there, so does $f$, contradicting
	our assumption that $z_{0}$ is an isolated fixed point of $f$ of multiplicity $1$. By Result~\ref{R:vigue},
	it follows that the fixed point set $\F(f)$ contains a positive-dimensional
	complex submanifold passing through $z_{0}$ and that $1$ is an eigenvalue of
	  $df(z_{0})$. But we just doctored the eigenvalues $\lambda_{jk}
	^{m_k}$ to lie outside an arc of $\Sone$ containing $1$ so no eigenvalue
	of $df(z_{0})$ is $1$.
    This contradiction shows that for each point $a \in U$,
	there exists a small ball centered at $a$ such that each $f \in H$ has at most
	one fixed point in the small ball. We can cover $\overline{D}$ by finitely
	many of these balls and the number of balls needed gives us the necessary
	bound. This completes the proof of the second part of the theorem. 
	\qed
	\medskip

	\noindent\textbf{Proof of Theorem \ref{T:meta}}

  Let $f \in \A(D)$ be an automorphism that extends
	diffeomorphically to $\overline{D}$. We may, in fact, assume that $f$ extends as
	a non-singular smooth map to some domain $\OM \Supset \overline{D}$.
	Furthermore, $d_\R f(p)$ (the real-derivative map from $\R^{2n}$ to $\R^{2n}$) is a complex linear map whenever $p \in \bdy D$ as $f$ extends
	diffeomorphically to $\overline{D}$ and the Cauchy--Riemann equations hold on
	$D$.
	
	We will first show that if $p \in \bdy D$ is a fixed point of $f$ then
	$df(p)$ (which is just $d_\R f(p)$ treated as a complex linear map under the standard identification) has a positive real eigenvalue (see \cite{Liu2016} for stronger
	conclusions when $D$ is strongly pseudoconvex). Let
	$\rho$ be a smooth defining function for $D$. 
	It is easy to see that $\rho \circ f$ is also a defining function for $D$. By standard
	properties of defining functions, there exists a positive smooth
	function $h$ defined in a neighborhood of $p$ such that
	\[
		\rho \circ f = h\, \rho.
	\]
	Differentiating both sides at $p$, we obtain
	\[
		d_\R\rho(p) \circ d_\R f(p) = h(p)\, d_\R\rho(p).
	\]
    Passing to
	the dual map, we obtain
	\[
		d_\R f(p)^{*}(d_\R \rho(p)) = h(p)\, d_\R \rho(p).
	\]
	Thus $d_\R \rho(p)$ is an eigenvector of $d_\R f(p)^{*}$ with eigenvalue $h(p) > 0$. It
	follows that $d_\R f(p)$ has a positive real eigenvalue. Therefore, the positive real
	eigenvalue of $d_\R f(p)$ is also an eigenvalue of $df(p)$ as a complex linear map.

	We now show that there exists a compact subset $K \subset D$ such that, for
	each $f \in H$ with $\F(f)$ discrete, we have $\F(f) \subset K$. Suppose, to
	the contrary, that no such compact set exists. Then there are sequences $\{f_{k}
	\}$ of automorphisms in $H$ with discrete fixed point set and $z_{k} \in \F(f_{k})$ such that $z_{k} \to p \in \partial
	D$. Since each $f_{k}$ is an automorphism, all eigenvalues of $df_{k}(z_{k})$ lie
	on $\Sone$. As the fixed point set of $f_{k}$ is
	discrete, none of eigenvalues of $df_{k}(z_{k})$ are equal to $1$. By the
	arc-avoidance lemma (Lemma \ref{T:arc}), there exists an open arc $L \subset \Sone$ containing $1$,
	and a sequence of integers $m_{k} \in \mathbb{N}$ such that
	\[
		\lambda_{jk}^{\,m_k}\notin L \quad \text{for all }j = 1, \ldots, n \text{ and
		}k \in \mathbb{N},
	\]
	where $\{\lambda_{jk}\}_{j=1}^{n}$ are the eigenvalues of $df_{k}(z_{k})$. By
	the compactness of $H$ in the $C^{\infty}$-topology on $\overline{D}$, after passing to
	a subsequence, we may assume that
	\[
		f_{k}^{m_k}\longrightarrow f \quad \text{in the $C^\infty(\overline{D}$)-topology},
	\]
	for some $f \in \A(D)$ that extends smoothly to $\overline{D}$. Since $f_{k}^{m_k}(z_{k}) = z_{k}$
	and $z_{k} \to p$, it follows that $f(p) = p$. By the previous paragraph, the
	 $df(p)$ has a positive real eigenvalue. On the other hand,
	$df_{k}^{m_k}(z_{k}) \to df(p)$, and once again we have doctored the eigenvalues
	$\lambda_{jk}^{\,m_k}$ to lie outside the arc $L$ containing $1$. This is
	a contradiction. Therefore, there exists a compact set $K \subset D$ such that
	$\F(f) \subset K$ for every $f \in H$ with discrete fixed point set.

	It now follows from exactly the same argument as in the proof of Theorem~\ref{T:main}
	that for each $z \in K$, there exists a neighborhood $U$ of $z$ such that
	any $f \in H$ with discrete fixed point set has at most one fixed point in $U$. We conclude that there exists a
	uniform bound $m = m(D)$ on the number of fixed points of $f$. 
	\qed

	\section{Examples and comments}
	\label{S:examples}

	\subsection{Examples of domains that satisfy the hypotheses of the main theorems}

	We will now discuss several classes of domains that satisfy the hypotheses of our
	main results. For our main theorems, we require automorphisms to extend either
	to the boundary or past the boundary. We first discuss extension beyond the
	boundary. Result~\ref{R:extension} already gives a general class of domains in
	which we have extension of automorphisms beyond the boundary. The most general
	result of this type are due to Diederich, Fornæss and Pinchuk.

	\begin{result}
		[Diederich and Pinchuk \cite{Pinchuk1995,Pinchuk2003}, Diederich and Fornæss \cite{fornaess1988}] Let $D \subset \C^{n}$
		be a bounded domain with smooth, real-analytic boundary and let $f \in \A(D)$.
		Assume that either $n = 2$ or that $f$ extends continuously to
		$\overline{D}$ or that $D$ is pseudoconvex. Then $f$ extends holomorphically beyond the boundary.
	\end{result}

    Theorem~\ref{T:main} combined with the above result shows the first part of 
    Corollary~\ref{C:main2}. The above result combined with Theorem~\ref{thm: uniform extension} shows that if
    $H \subset \A(D)$ is compact and if $n = 2$ or $D$ is psedouconvex then any $f \in H$ extends to be an automorphism of a common 
    domain $U \Supset \overline{D}$. We note that this last conclusion for $n = 2$ is the main result of
    \cite{Kaushal2004} and the pseudoconvex case is the main result of \cite{Coupet96}. Bedford and Pinchuk \cite{Bedford98} gave a complete classification of bounded
    domains with smooth, real-analytic boundary in $\C^2$ with non-compact automorphism group. 
    They showed that such a domain is biholomorphic to a domain of the form 
    \[
        \{(z,w):|z|^{2k} + |w|^2 < 1\}.
    \]
    Since such a domain is convex, the second part of Corollary~\ref{C:main2} follows from Theorem~\ref{T:main}
    and Result~\ref{R:viguecvx}.

	In the non-smooth setting, the most general results are for domains that
	admits certain type of symmetries.

	\begin{definition}
		Let $G$ be a compact Lie group and let $G$ act linearly on $\mathbb{C}^{n}$,
		i.e., there exists a continuous representation
		$\rho : G \to \mathrm{GL}(\mathbb{C}^{n})$. Denote by $\hol(\mathbb{C}^{n})^{G}$,
		the set of $G$-invariant entire functions, i.e.,
		\[
			\hol(\mathbb{C}^{n})^{G} := \{ f \in \hol(\mathbb{C}^{n}) : f \circ \rho(g)
			= f \text{ for all }g \in G \}.
		\]
		A domain $D \subset \mathbb{C}^{n}$ is called \textit{$G$-invariant} if
		\[
			\rho(g)\cdot D = D \quad \text{for all }g \in G.
		\]
	\end{definition}

	We have the following result of Ning, Zhang, and Zhou that builds on the work of
	Bell \cite{Bell1982} and subsumes a number of previous extension results.

	\begin{result}[Ning, Zhang, and Zhou \cite{Ning2017}]\label{R:Ning}
		 Suppose $G_{1}$ and $G_{2}$ are compact
		Lie groups acting linearly on $\mathbb{C}^{n}$ with $O(\mathbb{C}^{n})^{G_j}=
		\mathbb{C}$, and $0 \in \Omega_{j} \subset \mathbb{C}^{n}$ are $G_{j}$-invariant
		domains $(j=1,2)$. If $f : \Omega_{1} \to \Omega_{2}$ is a proper holomorphic
		mapping, then $f$ extends holomorphically to a neighborhood of
		$\overline{\Omega_1}$.
	\end{result}

	\begin{remark}
		Circular and Reinhardt domains that contain the origin are $G$-invariant. So
		are quasi-balanced domains. i.e., domains $D$ such that there exist positive
		integers $m_{1},\ldots,m_{n}$ such that for every $z=(z_{1},\ldots,z_{n})\in
		D$ and every $\lambda \in \overline{\mathbb{D}}$, we have
		\[
			(\lambda^{m_1}z_{1},\ldots,\lambda^{m_n}z_{n}) \in D .
		\]
	\end{remark}
    It is clear from the above remark that Corollary~\ref{C:main} follows from Theorem~\ref{T:main}.

	We now discuss results pertaining to the hypotheses in Theorem~\ref{T:meta}.
	The most general framework for studying smooth extension to the boundary is
	using a certain regularity condition on the Bergman projection called \textit{condition
	$R$} formulated by Bell and Ligocka \cite{Bell1980}.

	\begin{definition}[Condition $R$]
		Let $D \subset \C^{n}$ be a bounded domain with smooth boundary. We say that
		$D$ satisfies condition $R$ if the Bergman projection $P$ maps $C^{\infty}(\overline
		{D})$ into $C^{\infty}(\overline{D})$.
	\end{definition}

	We have the following major result of Bell and Ligocka which was inspired by
	the landmark work of Fefferman \cite{Fefferman1974}.

	\begin{result}[Bell and Ligocka \cite{Bell1980}]\label{R:Ligocka}
		 Let $D \subset \C^{n}$ be a bounded domain
		with smooth boundary that satisfies condition $R$. Then any $f \in \A(D)$
		extends to be a diffeomorphism of $\overline{D}$.
	\end{result}

	The following is a list of some general classes of bounded domains that satisfy
	condition $R$:
	\begin{enumerate}
		\item Strongly pseudoconvex domains \cite{Kohn1964}.

		\item Pseudoconvex domains with real-analytic boundary \cite{Fornaess1978, Kohn1979}.

		\item Smoothly bounded domains that admit a plurisubharmonic defining function
			\cite{Boas1991}.

		\item Smoothly bounded pseudoconvex domains with finite type boundary in the sense of D'Angelo
			\cite{Catlin1987}.
	\end{enumerate}

	The above list shows that there are plenty of domains that satisfy the extensibility
	hypothesis in Theorem~\ref{T:meta}. The following result due to Barrett
	\cite{Barrett1981} (see also \cite{Bell1987}) gives a
	sufficient condition for the extended automorphisms to have good convergence
	behavior.

	\begin{result}
    Let $D \subset \C^{n}$ be a bounded domain with smooth boundary 
		that satisfies condition $R$. Let $f_{i} \in \A(D)$ be a sequence such that $f_{i} \to f \in \A(D)$. Then $f
		_{i} \to f$ in the $\smoo^{\infty}(\overline{D})$-topology.
	\end{result}

	Thus, the classes of domains in the above list are natural classes of domains for
	which Theorem~\ref{T:meta} holds. Corollary~\ref{C:meta} says more in the
	setting of strongly pseudoconvex domains.

	\medskip

	\noindent
	\textbf{Proof of Corollary~\ref{C:meta}}

	By the Wong--Rosay theorem \cite{Wong1977,Rosay1979}, either $\A(D)$ is
	compact or $D$ is biholomorphic to the unit ball. If $D$ is biholomorphic to the
	unit ball, then Result~\ref{R:viguecvx} immediately shows $m$ can be taken to
	be $1$. So we can assume that $\A(D)$ is compact and Theorem~\ref{T:meta} gives
	us the required bound $m$. 
	\qed

  \noindent \textbf{Proof of Corollary~\ref{C:meta2}}

  Let $z_0 \in \F(f)$. The corollary now follows from taking $H$ to be the
  isotropy subgroup of $z_0$ in Theorem~\ref{T:meta}. That the isotropy group is compact follows from a simple argument involving Montel's theorem and Result~\ref{R:cartan}.

	\subsection{Comments on generalizations to more general maps}

	We now briefly discuss some possible generalizations of our main theorems to
	more general classes of holomorphic maps. 
	\begin{enumerate}
		\item The proof of the first part of Theorem~\ref{T:main} works
		\textit{mutatis mutandis} for a general holomorphic self-map that
		extends holomorphically past the boundary. 
		\item Result~\ref{R:Ning} is actually stated for proper holomorphic self-maps of
		$G$-invariant domains. So if $D$ is a $G$-invariant domain and $f$ is a
		proper holomorphic self-map whose fixed point  is
		discrete then the fixed point set is finite.
		\item Our proof of Theorem~\ref{T:meta} used a very simple version of
		Schwarz lemma at the boundary. If $D$ is smoothly bounded and strongly
		pseudoconvex then a more delicate analysis (see \cite{Liu2016,Wang2017}) reveals
		that if $f \in \hol(D,D)$ and extends smoothly to the boundary near a point 
        $p \in \bdy D$ with $f(p) = p$, then one of the eigenvalues of $df(p)$ must be positive. 
        Thus the arguments in Theorem~\ref{T:meta} could be of possible help in showing $\F(f)$ is
		finite. One pertinent observation is that if such an $f$ has a discrete fixed point set that is
		\textbf{not} a singleton then none of the fixed points can be
		\textit{attracting}. Of course, the key obstacle is to formulate a natural
		compactness condition. One possible (unnatural) condition is to assume that the closed cyclic group generated by $f$ in $\A(D)$ is compact in the $\smoo^\infty(\overline{D})$-topology.
		\item Result~\ref{R:Ligocka} also holds for proper holomorphic self-maps
		of smoothly bounded pseudoconvex domains that satisfy condition $R$
		\cite{BC82,DF82}.  It is natural to investigate when one has a
		uniform bound on the number of fixed points of proper holomorphic
		self-maps.  One needs to
		study the compactness of the family of proper holomorphic self-maps and
		uniform boundary regularity. The following papers might be of interest;
		\cite{Bell1988, Coupet96, Klingenberg1990,Klingenberg1991}.  
	\end{enumerate}

    \section*{Acknowledgements}
    The authors thank Dr. G. P. Balakumar for his insightful comments that helped to clarify and improve this article. The first author also thanks his friends, Dr. Jiju Mammen and Dr. Agniva Chatterjee, for their helpful discussions on key concepts in Several Complex Variables.

	\bibliographystyle{amsalpha}

	\bibliography{isolated}
\end{document}